\documentclass[11pt,letterpaper]{amsart}
\usepackage[utf8]{inputenc}
\usepackage[margin=1in]{geometry}
\usepackage[dvipsnames,svgnames]{xcolor}
\usepackage{todonotes}
\usepackage[foot]{amsaddr}
\usepackage{graphicx}
\usepackage{xspace}
\usepackage{amsmath}
\usepackage{amsthm,amssymb}
\usepackage{paralist}
\usepackage{mathrsfs}
\usepackage{url}
\usepackage{svg}
\usepackage{hyperref}

\newif\ifdraft\drafttrue

\ifdraft

\newcommand\js[1]{\todo[inline,size=\scriptsize,backgroundcolor=SpringGreen]{#1 - \textbf{Joseph}}}
\newcommand\gl[1]{\todo[inline,size=\scriptsize,backgroundcolor=Pink]{#1 - \textbf{Guillaume}}}

\else

\newcommand\js[1]{}
\newcommand\gl[1]{}

\fi

\makeatletter
\renewcommand{\email}[2][]{%
  \ifx\emails\@empty\relax\else{\g@addto@macro\emails{,\space}}\fi%
  \@ifnotempty{#1}{\g@addto@macro\emails{\textrm{(#1)}\space}}%
  \g@addto@macro\emails{#2}%
}
\makeatother

\newcommand\ie{{i.e.,}\xspace}

\newtheorem{theorem}{Theorem}
\newtheorem*{theorem*}{Theorem}
\newtheorem{definition}{Definition}

\newtheorem{lemma}{Lemma}
\newtheorem{claim}{Claim}

\newtheorem{remark}{Remark}

\newcommand\textbfit[1]{{\bf\em #1}}
\newcommand{\defin}[1]{\textbfit{\boldmath #1}}

 % a field

\newcommand{\N}{\mathbb{N}}

\newcommand{\F}{\mathcal{F}} % A family
\newcommand{\G}{\mathcal{G}} % Another one
\newcommand{\dpowparam}{4} % a
\newcommand{\semifinaldpow}{7} % 2a - 1
\newcommand{\finaldpow}{9} % 2a + 1
\newcommand{\maxlemconst}{13} % 2*4 + 1

\newcommand{\comp}[1]{\overline{#1}} % Another one
\newcommand{\mg}[1]{m(#1)}

 % set of elements
 % powerset

\begin{document}

\title{$d$-Galvin families}

\author{Johan Håstad}
\address[Johan Håstad]{KTH Royal Institute of Technology, Stockholm, Sweden}
\email{johanh@kth.se}

\author{Guillaume Lagarde}
\address[Guillaume Lagarde]{KTH Royal Institute of Technology, Stockholm, Sweden}
\email{glagarde@kth.se}

\author{Joseph Swernofsky}
\address[Joseph Swernofsky]{KTH Royal Institute of Technology, Stockholm, Sweden}
\email{josephsw@kth.se}

\date{\today}

\begin{abstract}

The Galvin problem asks for the minimum size of
  a family $\F \subseteq \binom {[n]} {n/2}$ with the property that,
  for any set $A$ of size $\frac n 2$, there is a set $S \in \F$ which
  is balanced on $A$, meaning that $|S \cap A| = |S \cap \comp A|$. We
  consider a generalization of this question that comes from a possible approach in
  complexity theory. In the generalization the required property is, for any $A$, to be able to find $d$ sets from a family
  $\F \subseteq \binom {[n]} {n/d}$ that form a partition of $[n]$
  and such that each part is balanced on $A$. We construct such
  families of size polynomial in the parameters $n$ and $d$.  
\end{abstract}

\maketitle

\smallskip
\section{Introduction}
\label{section:intro}

\subsection{Galvin problem}
The starting point of this paper is a question raised by Galvin in
extremal combinatorics. Given two sets $A$ and $S$, we say that $S$ is
\defin{balanced on $A$} if $|S \cap A| = \frac{|S|}{2}$.

\begin{figure}[h!]
  \label{fig:exGalvin}
    \centering
    \includegraphics[scale=0.6]{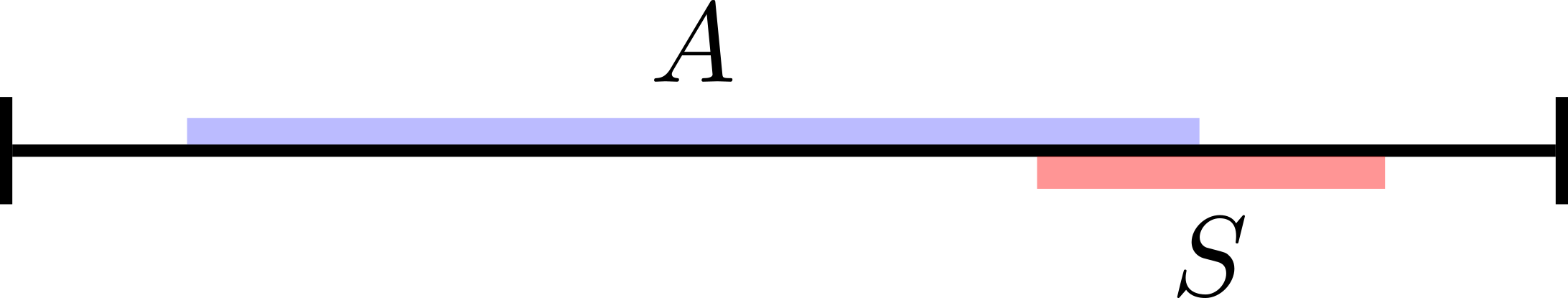}
    \caption{$S$ balanced on $A$}
  \end{figure}

\begin{definition}[Galvin family]
  If $4 \mid n$, a family $\F \subseteq \binom {[n]} {n/2}$ is said to
  be \defin{Galvin} if for any $A \in \binom {[n]} {n/2}$ there
  exists a set $S \in \F$ which is balanced on $A$ (\ie
  $|S \cap A| = \frac{n}{4}$).
\end{definition}
The \defin{Galvin problem} asks for the minimal size, denoted by
$\mg{n}$, of a Galvin family. An upper bound of
$\mg{n} \leq \frac{n}{2}$ follows from the family given by the sets
$S_i = \{i, i+1, \dots, i + \frac{n}{2} -1 \}$ for $i \in
[n/2]$. Lower bounds for the size of Galvin families are more subtle. An
easy counting argument shows that
$m(n) \geq \frac{\binom {n} {n/2}}{\binom {n/2} {n/4}^2} =
\Theta(\sqrt{n})$, which is far from $n/2$. Frankl and Rödl~\cite{FR}
established that $\mg{n} \geq \epsilon n$ for some $\epsilon > 0$
whenever $\frac{n}{4}$ is odd, as a corollary to a strong result in
extremal set theory. This linear bound was later strengthened by
Enomoto, Frankl, Ito and Nomura~\cite{EF} to $\mg{n} = n/2$, with the
same parity constraint, thus showing the optimality of the
construction in this special case. Later, using Gröbner basis methods
and linear algebra, Hegedűs~\cite{H09} obtained that
$\mg{n} \geq \frac{n}{4}$ whenever $\frac{n}{4} > 3$ is a prime.

\begin{figure}[h!]
  \label{fig:exGalvin}
    \centering
    \includegraphics[scale=0.23]{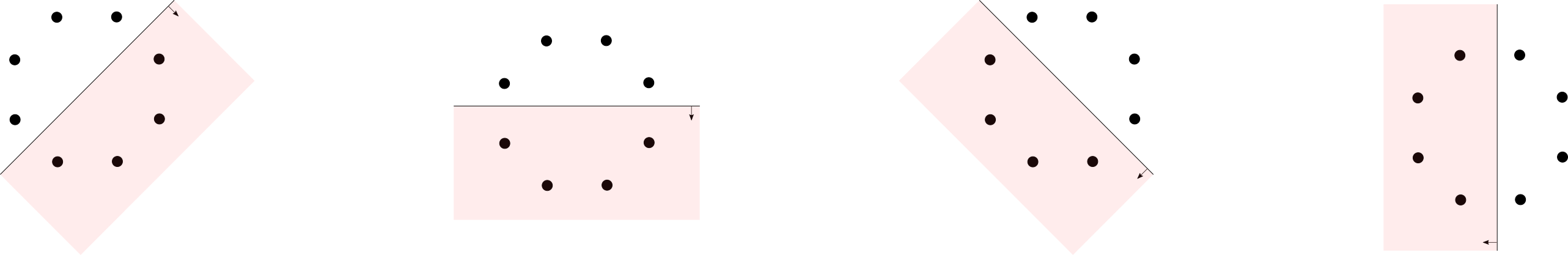}
    \caption{A Galvin family for $n=8$ consisting of 4 sets}
  \end{figure}

\subsection{Generalizations and related works}
Surprisingly, problems closely related to the one of Galvin proved
useful in arithmetic complexity theory, in order to give lower bounds
on the size of arithmetic circuits computing some target
polynomials. This connection was first noticed by Jansen~\cite{J08}, and was
recently successfully used in a paper by Alon et al.~\cite{AlonKV18}. There the elements of
the Galvin family $\F$ are allowed to be sets of size between $2 \tau$
and $n - 2\tau$ ($\tau$ being an integer). Furthermore, for a given
$A \in \binom {[n]} {n/2}$ instead of asking for the existence
of a set $S \in \F$ perfectly balanced on $A$ the authors look for a
set $S$ which is nearly balanced, \ie
$\left||S \cap A| - \frac{|S|}{2}\right| < \tau$ for the same $\tau$. For this setting,
Alon, Kumar and Volk~\cite{AlonKV18} showed, using the so-called
polynomial method, that $\mg{n} \geq \Omega(n/\tau)$.

Alon, Bergmann, Coppersmith, and Odlyzko~\cite{AlonBCO88} investigate
a problem dealing with $\{-1,+1\}$ vectors which looks similar to the
Galvin one. When rephrasing it as an extremal problem over sets, it
reads as follows: what is the minimal number $K(n,c)$ on the size of a
family $\F \subseteq \mathcal P([n])$ such that the following holds
$$\forall A \subseteq [n], \exists S \in \F,
\left | |\comp A \triangle S| - |A \triangle S| \right | \leq c,$$
where $\triangle$ denotes the symmetric difference. Setting $c = 0$
and asking all sets to be of size $n/2$ is exactly Galvin
problem. However, it does not seem to be any evident dependencies
between the two problems.

We consider here a different type of generalization.  Asking for a set
$S \in \F$ to be balanced on $A \in \binom {[n]} {n/2}$ is equivalent
(up to a factor $2$ in the family size) to ask for a partition of
$[n]$ in two parts, namely $(S, \comp S)$, such that each part is
balanced on $A$ and such that $S$, $\comp S$ are elements of
$\F$. Instead of splitting $[n]$ in two parts, we look for partitions
that involve more sets. Introducing a parameter $d \in \N$, we want,
for a given $A$, to be able to find $d$ sets in $\F$ that form a
partition of $[n]$ and such that each set is balanced on $A$.

The
original motivation for considering this generalization stems from
arithmetic circuits. There, an open question is to know whether there
is a separation between two models of computation called multilinear
algebraic branching programs (ml-ABPs) and multilinear circuits
(ml-circuits). By ``separation'', we mean that there is some specific
polynomial $f$ that can be computed by a small ml-circuit but any
ml-ABP for $f$ must be of size superpolynomial in the degree and the
number of variables of $f$. Proving that any generalized Galvin
families (\ie with $d$ parts in the partitions -- see below for a
formal definition) must be of superpolynomial size (in $n$ the size of
the ground set, and $d$ the number of parts) would imply a separation
between ml-ABPs and ml-circuits. Since our main result is to prove
that generalized Galvin families of polynomial size exist, this
approach is unfortunately not promising. Note that this does not call
into question either the plausible separation between ml-ABPs and
ml-circuits or the approach through a proof that ml-ABPs cannot
compute efficiently so-called ``full rank polynomials''. This only
rules out a specific approach to tackling the question of knowing
whether ml-ABPs can efficiently compute full rank
polynomials. However, we believe that the construction is of intrinsic
combinatorial interest.

\section{$d$-Galvin families}
\subsection{Definition}
We start with the formal definition of generalized Galvin families:

\begin{definition}[$d$-Galvin families]
  Given two integers $d,n \in \N$ such that $2d \mid n$, we say that a
  family $\F \subseteq \binom {[n]} {\frac{n}{d}}$ is
  \defin{$d$-Galvin} if for any $A \in \binom {[n]} {n/2}$, \defin{$A$
  is handled by $\F$}, meaning that there exist $d$ sets
  $S_1,\dots,S_d \in \F$ such that:
  \begin{itemize}
  \item The $S_i$ form a partition of $[n]$,
  \item Each $S_i$ is balanced on $A$ (\ie $|S_i \cap A| = \frac{n}{2d}$).
  \end{itemize}
\end{definition}

\begin{figure}[h!]
  \label{fig:exGalvin}
    \centering
    \includegraphics[scale=0.5]{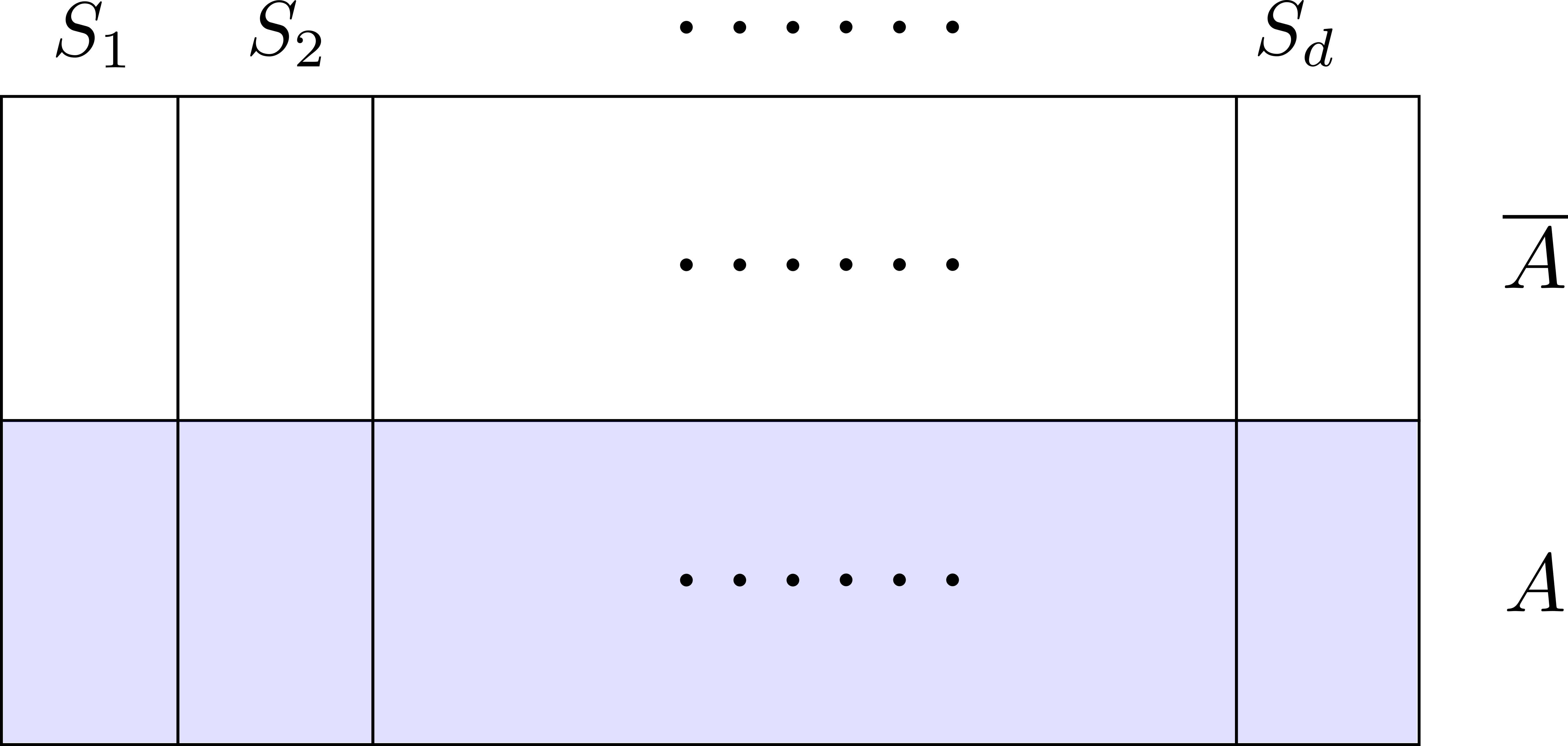}
    \caption{Set $A$ handled by a partition $S_1, S_2, \dots S_d$}
  \end{figure}

\begin{remark}
  Note that a $2$-Galvin family is simply a Galvin family
  (up to adding the complements of any set in the family).
\end{remark}

Somewhat surprisingly, small $d$-Galvin families exist.
  
\begin{theorem}
\label{thm:main}
For any $d, n \in \N$ such that $2d \mid n$, there exists a $d$-Galvin
family of size $\tilde\Theta(n^2 d^\finaldpow)$.
\end{theorem}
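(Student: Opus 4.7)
The plan is a probabilistic construction: I would sample a family $\F \subseteq \binom{[n]}{n/d}$ of total size $\tilde O(n^2 d^{\finaldpow})$ from a carefully chosen distribution, and argue via a union bound that $\F$ is $d$-Galvin with positive probability.

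The structural reduction is greedy extraction of the partition. Given $A \in \binom{[n]}{n/2}$, I would pick $S_1 \in \F$ balanced on $A$, then pick $S_2 \in \F$ with $S_2 \subseteq [n] \setminus S_1$ balanced on $A$, and continue. After $i$ successful iterations, the residual universe $U_i = [n] \setminus (S_1 \cup \cdots \cup S_i)$ has size $(d - i) n / d$ and the restriction $A \cap U_i$ is balanced inside $U_i$. Thus the whole problem reduces to a ``one-step'' statement: for every $U \subseteq [n]$ of size $k n / d$ (for some $k \in \{1, \dots, d\}$) and every $A' \subseteq U$ with $|A'| = |U|/2$, some $S \in \F$ satisfies $S \subseteq U$, $|S| = n/d$, and $|S \cap A'| = n/(2d)$.

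For a fixed pair $(U, A')$, the probability that a uniformly random $n/d$-subset of $U$ is balanced on $A'$ is $\Theta(|U|^{-1/2})$ by standard hypergeometric anti-concentration, so $\tilde O(\sqrt{|U|})$ random samples easily suffice for one $(U, A')$. The main obstacle is that there are exponentially many pairs $(U, A')$, and a naive union bound is ruinous. Circumventing this is the crux of the proof.

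I would address the obstacle in two complementary ways. First, I would stratify the sampling by $k$: for each $k \in \{1, \dots, d\}$, draw a sub-family from a distribution adapted to $n/d$-subsets of candidate residual universes of size $k n/d$, so that the total family size scales polynomially in $d$ rather than exponentially. Second, I would replace the exponential union bound by a polynomial one by exploiting that residual universes are not arbitrary: $U_i$ is determined by the previously selected $S_1, \dots, S_i \in \F$, and the greedy procedure can be analyzed by a martingale/concentration argument along the trajectory $A \to U_1 \to U_2 \to \cdots$, reducing the count of effective ``bad'' events to $\operatorname{poly}(n)$ per stratum. The final exponents --- $2$ on $n$ and $\finaldpow$ on $d$ --- are consistent with an $O(d)$-stratum accumulation of polynomial-per-stratum losses together with a $\operatorname{polylog}(n)$ factor absorbing the concentrated union bound over $\binom{n}{n/2}$ choices of $A$; the intermediate exponents $\dpowparam$ and $\semifinaldpow$ visible in the paper's macros suggest a chain of improving lemmas along these lines.
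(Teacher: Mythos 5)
There is a genuine gap, and it is exactly at the point you flag as the crux: ensuring that the $d$ balanced sets can be taken pairwise disjoint. Your greedy reduction requires, at step $i$, a set $S \in \F$ with $S \subseteq U_i$ where $|U_i| = (d-i)n/d$. For a family of polynomial size drawn from anything resembling a uniform distribution, the expected number of members contained in a fixed $U$ of size $(1 - i/d)n$ is about $|\F| \cdot (1 - i/d)^{n/d} \approx |\F| e^{-in/d^2}$, which is already far below $1$ after the first step unless $d = \Omega(\sqrt{n/\log n})$. So the ``distribution adapted to candidate residual universes'' must carry the entire proof, and it cannot be defined the way you suggest: the candidate residual universes are unions of previously chosen members of $\F$, so conditioning the sampling distribution on them is circular, and even granting that, the number of reachable residual universes at depth $i$ is of order $|\F|^i$, which is superpolynomial for $i$ growing with $d$. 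The martingale language does not repair this; the obstruction is not a failure of concentration but the absence of any set of $\F$ inside the residual universe at all.

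The paper's resolution is a structural device you would need to supply. It partitions $[n]$ into $d+1$ fixed buckets $\chi_0, \dots, \chi_d$ (the two extreme ones of size $k = n/(2d)$, the rest of size $2k$), samples $r$ random $k$-subsets $T_i^h \subseteq \chi_i$ per bucket, and takes $\F$ to consist of the sets $(\chi_i \setminus T_i^h) \cup T_j^l$. A partition of $[n]$ is then obtained by choosing an ordering $\pi$ of the buckets and chaining $S_j = (\chi_{\pi(j-1)} \setminus T_{\pi(j-1)}) \cup T_{\pi(j)}$: disjointness and coverage are automatic because consecutive sets use complementary halves of a shared bucket, so the exponential union bound over residual universes never arises. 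The per-$A$ analysis then chooses $\pi$ so that the cumulative bucket imbalances stay $O(\sqrt{k \log d})$, making each link of the chain balanceable with probability $\tilde\Theta(d^{-\semifinaldpow/2} n^{-1/2})$, whence $r = \tilde\Theta(n^{1/2} d^{\semifinaldpow/2})$ samples per bucket suffice; finally the union bound over all $\binom{n}{n/2}$ sets $A$ is absorbed not by a polylogarithmic factor but by taking $n$ independently permuted copies of the family to push the per-$A$ failure probability below $2^{-n}$, which accounts for one of the two factors of $n$ in the bound. Your proposal correctly identifies the probabilistic shell (sample, anti-concentration, union bound) but is missing the combinatorial mechanism that makes the partition exist, which is the actual content of the theorem.
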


Here $\tilde\Theta(f(n,d))$ is some function $g$ such that
  $f(n,d) (\ln f(n,d))^{c_1} \leq g(n,d) \leq f(n,d) (\ln f(n,d))^{c_2}$ for
  some integers $c_1, c_2$. The next section is devoted to the
construction of a $d$-Galvin family, yielding a proof of the main
theorem.  

\subsection{Proof of Theorem~\ref{thm:main}}

For technical reasons, we need to distinguish two cases in the proof
of Theorem~\ref{thm:main}: we start by giving a construction when $d$
is reasonably small, then we show how to adapt it to handle larger $d$.

\noindent{\textbf{First case: }$d < \frac{n}{(\ln n)^3}$}

The overall idea is to construct a family $\F$ of size
$\tilde \Theta(n d^{\finaldpow})$ such that a random set
$A \in \binom {[n]} {n/2}$ is handled by $\F$ with probability at
least $1/2$. Taking the random family $\G$ which is the union of $n$
independent such $\F$ increases this probability to at least
$1- 2^{-n}$. By the union bound, the probability that $\G$ handles all
sets $A$ is non-zero, yielding the existence of the desired family. We
now focus on the construction of such a family $\F$.

\smallskip

\noindent \textbf{Construction of $\F$}

For a set $X$, we use the notation $A \sim X$ to denote that $A$ is a 
set chosen uniformly at random from $X$. We let $k := \frac n {2d}$ for the rest of the paper.
\begin{lemma}
\label{lem:randomf}
When $d < \frac{n}{(\ln n)^3}$, there is a family
$\F \subseteq \binom {[n]} {2k}$ of size
$\tilde \Theta(n d^\finaldpow)$ such that

 $$\Pr_{A \sim \binom {[n]} {n/2}}(A \text{ is handled by } \F) \geq 1/2$$

\end{lemma}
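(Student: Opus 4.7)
The plan is to apply the probabilistic method. I will define a random family $\F$ of size $\tilde\Theta(nd^{\finaldpow})$ such that, for every fixed $A \in \binom{[n]}{n/2}$, the probability over $\F$ that $A$ is handled is at least $3/4$; averaging this bound over $A$ then yields a deterministic family whose $A$-failure probability is at most $1/4$, which is more than enough for the lemma.

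For the construction of the random $\F$, I would start from a uniformly random partition $(B_1,\dots,B_d)$ of $[n]$ into blocks of size $2k$ and put each $B_i$ into $\F$. To this base partition I add many ``swap variants'': for every ordered pair of indices $(i,j)$ and for a carefully chosen swap size $s$, I pick a large number of uniformly random pairs $X \subseteq B_i$, $Y \subseteq B_j$ with $|X|=|Y|=s$, and add both $(B_i\setminus X)\cup Y$ and $(B_j\setminus Y)\cup X$ to $\F$. Fix any $A$. The block loads $|B_i\cap A|$ follow a multivariate hypergeometric distribution, so standard concentration bounds imply that the block imbalances $D_i:=|B_i\cap A|-k$ all satisfy $|D_i|=O(\sqrt{k\log d})$ with high probability. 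The repair then proceeds sequentially along a chain $B_1\to B_2\to\cdots\to B_d$: at step $i$ I pick a swap variant between $B_i$ and $B_{i+1}$ that annihilates $B_i$'s current imbalance. Since $\sum_i D_i=0$ is preserved by swaps, the last block automatically becomes balanced at the end of the chain.

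The heart of the argument is a local-central-limit-type estimate: a random swap of size $s$ shifts $|B_i\cap A|$ by any prescribed target $\Delta$ with $|\Delta|=O(\sqrt s)$ with probability $\Omega(1/\sqrt s)$. Storing on the order of $\sqrt s$ random swap variants per ordered pair (up to logarithmic factors) and applying a union bound over the $d-1$ steps of the chain then makes the full repair succeed with probability at least $3/4$. The main obstacle, and the source of the exponent $\finaldpow$ in the size bound, is that the imbalance I must actually correct at step $i$ is not $D_i$ itself but a partial sum of all previous $D_j$'s; by a random-walk argument this partial sum can be of order $\sqrt{dk\log d}$, forcing the swap size $s$ to be polynomially large in $d$. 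Additionally, ensuring that the $d-1$ swaps chosen along the chain are pairwise disjoint (so that the resulting $d$ sets really partition $[n]$) imposes further polynomial overheads in $d$ from yet another union bound over potential collisions between swap sets. Tracking all these parameters together with the logarithmic losses from the various concentration bounds is what ultimately produces the claimed $d^{\finaldpow}$ factor.
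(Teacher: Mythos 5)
There is a genuine gap in your repair chain, and it is exactly the obstacle that the paper's key combinatorial idea (Claim~\ref{goodpi}) is designed to remove. You correctly observe that with the fixed ordering $B_1 \to B_2 \to \cdots \to B_d$, the imbalance you must cancel at step $i$ is the partial sum $\sum_{j \le i} D_j$, which is typically of order $\sqrt{ik} = \Theta(\sqrt{n})$ in the middle of the chain. But your proposed fix --- ``forcing the swap size $s$ to be polynomially large in $d$'' --- cannot work: a swap lives inside two blocks of size $2k = n/d$, so $s \le 2k$, and a random swap of size $s$ hits a target shift $\Delta$ with probability $\Omega(1/\sqrt{s})$ only when $\Delta$ is within $O(\sqrt{s}) = O(\sqrt{k})$ of the mean shift. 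A target of order $\sqrt{dk}$ is $\sqrt{d}$ standard deviations out, so the success probability per sample is $\exp(-\Omega(d))$ (and for $d \gtrsim \sqrt{n}$ the shift exceeds the block size and is outright impossible); no polynomial number of stored swap variants rescues this. The paper's resolution is not a larger swap but a reordering: it processes the buckets in an order $\pi$ chosen \emph{adaptively as a function of $A$} (greedily picking the next bucket whose error has opposite sign to the running total), which guarantees every partial sum is bounded by $\max_i |R_i| = O(\sqrt{k \ln d})$ --- the same scale as a single block's fluctuation --- so the local CLT estimate applies at every step with probability $\Omega(d^{-O(1)}/\sqrt{k})$. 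Crucially this is compatible with a fixed family because the candidate sets are built from \emph{all} $\Theta(d^2)$ ordered pairs of buckets, so whichever ordering $A$ demands is available.

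A secondary issue is your disjointness worry, which the paper's construction avoids entirely: instead of mutating blocks by swaps, each final set is $\comp{T_{\pi(j-1)}} \cup T_{\pi(j)}$, i.e., one designated half of bucket $\pi(j-1)$ together with the complementary-type half of bucket $\pi(j)$; since each intermediate bucket is split into $T_i$ and $\comp{T_i}$ feeding two consecutive sets, the $d$ chosen sets partition $[n]$ automatically and no collision union bound is needed. Your outer framing (random family, per-$A$ success probability bounded below, then averaging) matches the paper, but without the adaptive ordering the core step of the argument fails.
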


Before going into the construction, let us see how we can prove the
main theorem, with Lemma~\ref{lem:randomf} in hand.
\begin{proof}[Proof of Theorem~\ref{thm:main}, first case]
  Let $\sigma_1, \dots, \sigma_n$ be $n$ permutations of $[n]$, chosen
  uniformly at random. For any of these, construct the family
  $\F_{\sigma_i} = \sigma_i(\F)$, \ie the family from
  Lemma~\ref{lem:randomf} where any element $e \in [n]$ has been
  replaced by $\sigma_i(e)$. Consider the family
  $\G := \cup_{i \in [n]} \F_{\sigma_i}$. We aim to prove that $\G$ is
  $d$-Galvin with non-zero probability. Given a set $A$, let
  $\mathrm{H_i}$ be the event: ``$A$ is handled by
  $\F_{\sigma_i}$''. $\mathrm{H_i}$ is equivalent to
  ``$\sigma_i^{-1}(A)$ is handled by $\F$''. As $\sigma_i^{-1}(A)$ is a
  uniformly random set independent from $\sigma_{i'}^{-1}(A)$ for
  $i \neq i'$, this proves the independence between the events
  $\mathrm{H_i}$. From this we conclude
  $$\Pr_{A \sim \binom {[n]} {n/2}}(\forall i \in [n], A \text{ is not
    handled by } \F_{\sigma_i}) \le 2^{-n}$$ Thus, by the union bound,
  there is a non-zero probability that $\G$ handles all sets $A$,
  concluding the proof of the theorem.

\end{proof}

The rest of the section consists of a proof of
  Lemma~\ref{lem:randomf}. The overall strategy is to divide the
  elements of $[n]$ into buckets, denoted by $\chi_i$, and build the
  sets $S$ from any pair of buckets $(\chi_i,\chi_j)$. Suppose the
  amount by which these buckets are unbalanced on $A$ are $R_i$ and
  $R_j$ respectively. If half the elements of $S$ are chosen from
  bucket $\chi_i$ and half from bucket $\chi_j$ then the amount by
  which $S$ is unbalanced on $A$ will be close to a normal
  distribution with expectation depending on $R_i$ and $R_j$. By showing a
  good upper bound on the $R_i$, the probability that $S$ is balanced is
  reasonably large, and picking only polynomially many random sets $S$ is
  sufficient. In fact, we must be slightly more careful because the
  bucket errors accumulate as we pick many sets $S$. Fortunately, we
  can manage this by taking an ordering $\pi$ of the buckets such that
  the error of $\cup_{j \leq i} \chi_{\pi(j)}$ stays small for all $i$.

\begin{proof}[Proof of Lemma~\ref{lem:randomf}]
  First, we divide $[n]$ into several intervals (recall that $k = \frac n {2d}$).
\begin{itemize}
\item $\chi_0 = (0, k]$,
\item $\chi_i = ((2i-1)k, (2i+1) k]$
  for $i \in [d-1]$,
\item $\chi_d = ((2d-1) k, n]$.
\end{itemize}
For $i \in [d-1]$ we create sets $G_i = \{T_i^h, h \in [1,r] \}$ by sampling independently
  $r = \tilde \Theta(n^{1/2} d^{\semifinaldpow / 2})$ subsets $T_i^h \sim \binom {\chi_i}{k}$
and adding them to $G_i$. For technical reasons, we let $G_0$ to be
the singleton $\{\emptyset\}$ and $G_d = \{\chi_d\}$.  Finally let
$\F = \{(\comp {T_i^h} \cup T_{j}^{l}: i,j \in [0, d], T_i^h \in G_i, T_j^l \in
G_j\}$, where $\comp {T_i^h}$ denotes $\chi_i \setminus T_i^h$ . 
Now, we
claim that such a random $\F$ handles $A \sim \binom {[n]} {n/2}$ with
probability at least $1/2$, giving the existence of the desired
family. As there are $\Theta(d^2)$ pairs $(i, j)$ to consider and for each one we
add $\tilde\Theta((n^{1/2} d^{\semifinaldpow/2})^2)$ sets $S$ to $\F$,
this gives a total size $|\F| = \tilde\Theta(n d^\finaldpow)$.

  For  $I \subseteq [0,d]$ 
  we introduce an \defin{error term $R$(I)} 
  to represent the error in balancing $A$ . We let
$\chi(I) = \cup_{i \in I} \chi_i$ and
$R(I) = |A \cap \chi(I)| - \frac{|\chi(I)|} 2$. Furthermore we write $R_i := R(\{i\})$. For reasons that will
become clear later, we want to choose a permutation $\pi$ of $[0,d]$
with $\pi(0) = 0$ and $\pi(d) = d$ with
$\max_{i \in [0, d]} |R(\pi([0, i]))|$ small.

\begin{figure}[h!]
  \label{fig:exGalvin}
    \centering
    \includegraphics[scale=2]{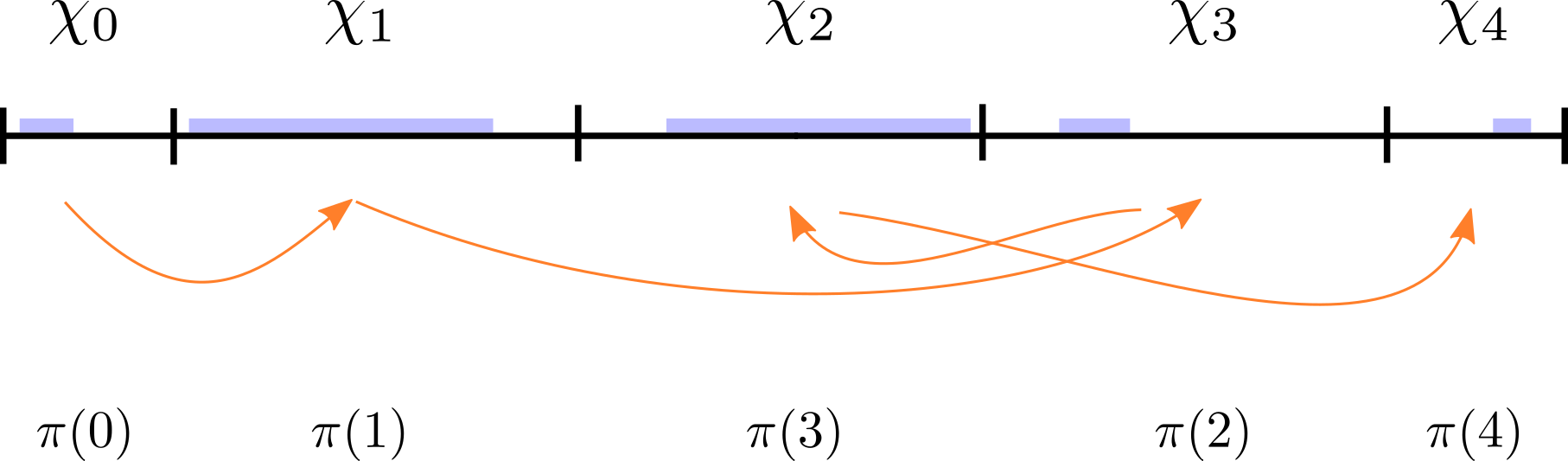}
    \caption{An ordering $\pi$}
  \end{figure}

\begin{claim}\label{goodpi} $\exists \pi : \max_{i \in [0, d]} |R(\pi([0, i]))| \le
\max_{i \in [0,d]} |R_i|$
\end{claim}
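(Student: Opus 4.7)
The plan is to construct $\pi$ explicitly by a sign-greedy ordering of the middle indices. Set $M := \max_{i \in [0,d]} |R_i|$ and note that $\sum_{i=0}^d R_i = R([0,d]) = |A| - n/2 = 0$. The conditions $\pi(0)=0$ and $\pi(d)=d$ fix the first and last terms of the sequence of partial sums $p_t := \sum_{j \le t} R_{\pi(j)}$ to be $p_0 = R_0$ and $p_d = 0$. The task therefore reduces to ordering the middle indices $1,\dots,d-1$ so that $|p_t| \le M$ at every intermediate step.

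At step $t+1$, I would pick a still-unused middle index $j$ whose $R_j$ has sign opposite to that of $p_t$ whenever possible: if $p_t \ge 0$ search for a remaining $j$ with $R_j \le 0$, and symmetrically if $p_t < 0$ search for one with $R_j \ge 0$. When such a $j$ exists, the update $p_{t+1} = p_t + R_j$ trivially lies in $[-M, M]$: the two summands have opposite signs (or one vanishes) and $|R_j| \le M$. This is the easy case, verified in one line.

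The main obstacle is the forced regime, reached when $p_t \ge 0$ but every remaining middle $R_j$ is strictly positive (the case $p_t \le 0$ with all remaining $R_j$ strictly negative is symmetric). Here one cannot flip signs and must append all remaining positives in some order. Using the identity $\sum_i R_i = 0$, the sum $T$ of the remaining middle terms equals $-p_t - R_d$, and from $T \ge 0$ one deduces $R_d \le -p_t \le 0$, hence $-R_d \le M$ by $|R_d| \le M$. Appending the remaining positive terms in any order then produces a monotone nondecreasing walk from $p_t$ up to $p_t + T = -R_d$, and every intermediate value lies in $[p_t, -R_d] \subseteq [0, M]$. Appending $R_d$ at the end brings the walk to $0$, closing the induction.

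The argument is self-contained and the only real content is the forced case, where the \emph{global} constraint $\sum_i R_i = 0$ combined with $|R_d| \le M$ is exactly what keeps the monotone stretch from escaping $[-M, M]$. If I wanted to be tidy I would present the verification as an induction on $t$ with invariant "$p_t \in [-M,M]$ and the choice rule is feasible", so that the greedy description and the invariant proof are proved simultaneously, but no additional ideas are needed beyond those above.
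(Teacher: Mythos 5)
Your argument is correct and rests on the same core idea as the paper's proof: a sign-greedy ordering that always appends an index whose $R$-value has sign opposite to the running partial sum, with feasibility guaranteed by $R([0,d]) = \sum_i R_i = 0$. The one place where you go beyond the paper is the treatment of the endpoint constraint $\pi(d)=d$: the paper's proof greedily picks among \emph{all} remaining elements and never explains why the index $d$ can be reserved for the last position, even though this is required by the construction (since $G_0=\{\emptyset\}$ and $G_d=\{\chi_d\}$, the sets $S_j$ only partition $[n]$ when $\pi(0)=0$ and $\pi(d)=d$). Your analysis of the forced regime --- when $p_t\ge 0$ and every remaining middle term is positive, the zero-sum identity gives $R_d\le -p_t\le 0$, so the monotone stretch stays inside $[0,-R_d]\subseteq[0,M]$ before the final step lands at $0$ --- is exactly what is needed to close that gap, and the symmetric case works the same way. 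So your proposal is not merely correct; it is a more complete version of the paper's argument.
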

\begin{proof}
  We let $\pi(0)$ be fixed to be $0$, and for each $i \geq 0$, pick $\pi(i+1)$
  among the remaining elements such that $R_{\pi(i+1)}$ has opposite
  sign from $R(\pi[0,i])$. If $R(\pi[0, i]) = 0$ pick any value of $\pi(i+1)$. Note that this is always possible as
  $R([0,d]) = 0$.
\end{proof}

We fix $\pi$ to be a permutation that fulfills Claim~\ref{goodpi} for the rest of the paper.

\begin{claim}\label{low_max_lem} With probability at least $\frac 3 4$ we have $\max_{i \in [0,d]} |R_i|
  \le \sqrt{\ln (\maxlemconst
   d)} \sqrt k$.
\end{claim}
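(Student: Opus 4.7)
The plan is to treat each $R_i = |A \cap \chi_i| - |\chi_i|/2$ as a centered hypergeometric random variable, apply a sharp tail bound for it, and then union-bound over the $d+1$ indices. Since $A$ is uniform in $\binom{[n]}{n/2}$, for any fixed $\chi_i$ the count $|A \cap \chi_i|$ has a hypergeometric distribution with mean $|\chi_i|/2$, so standard concentration applies.

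The central observation is that Hoeffding's inequality for the hypergeometric distribution can be used in either of its two symmetric forms (``sample from $A$'' or ``sample from $\chi_i$''), giving
\[
\Pr\bigl[|R_i| \ge t\bigr] \;\le\; 2\exp\!\left(-\frac{2t^2}{\min(|A|,\,|\chi_i|)}\right).
\]
Because $|\chi_i| \le 2k$ while $|A| = n/2 = dk$, for $d \ge 2$ the minimum is just $|\chi_i|$, and one obtains $\Pr[|R_i|\ge t] \le 2\exp(-2t^2/|\chi_i|)$. Setting $t = \sqrt{k \ln(\maxlemconst\, d)}$, this yields $2/(\maxlemconst\, d)$ for the $d-1$ interior buckets (which have size $2k$) and the much stronger $2/(\maxlemconst\, d)^2$ for the two boundary buckets $\chi_0, \chi_d$ (which have size $k$).

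Then a simple union bound over $i \in [0,d]$ gives
\[
\Pr\!\left[\max_{i}|R_i| \ge \sqrt{k\ln(\maxlemconst\, d)}\right] \;\le\; (d-1)\cdot\frac{2}{\maxlemconst\, d} + 2\cdot\frac{2}{(\maxlemconst\, d)^2} \;\le\; \frac{2}{\maxlemconst} + O\!\left(\frac{1}{d^2}\right),
\]
which is below $1/4$ for all $d \ge 2$; the degenerate case $d=1$ reduces to a single pair $R_0 = -R_1$ and follows directly from the $k$-sized Hoeffding bound. The constant $\maxlemconst = 13$ is chosen to leave a comfortable margin after the union bound.

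I do not expect a genuine obstacle here; the only subtlety is remembering to use the symmetric form of Hoeffding so the concentration scale is $\sqrt{k}$ (governed by the smaller set $\chi_i$) rather than the much weaker $\sqrt{n}$ one would get by naively plugging in the sample size $|A|$. The uneven sizes of $\chi_0, \chi_d$ versus the interior $\chi_i$ are handled automatically, since the boundary terms contribute a quadratically smaller probability and are easily absorbed.
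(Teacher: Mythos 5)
Your proposal is correct and follows essentially the same route as the paper: model each $R_i$ as a centered hypergeometric variable, apply Hoeffding's tail bound with the concentration scale set by the bucket size $|\chi_i|\le 2k$ (the paper gets this directly by viewing $\chi_i$ as the sample of size $2k$ drawn from $[n]$ with $n/2$ marked elements, rather than via the symmetric form), and union-bound over the $d+1$ buckets, noting that $\chi_0$ and $\chi_d$ give a strictly stronger bound. The only cosmetic difference is that the paper simply bounds all $d+1$ terms by $\tfrac{2}{\maxlemconst d}$ to get $\tfrac{2}{\maxlemconst}\cdot\tfrac{d+1}{d}<\tfrac14$ for $d\ge 2$, whereas you track the boundary terms separately; both yield the claim.
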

\begin{proof}

  For $i \in [1,d-1]$, each element $R_i$ follows a hypergeometric distribution $H(\frac n 2, n, 2k)$. We get the following bound, due to Hoeffding~\cite{H63}:

    $$P(|R_i| > x) \le 2 \exp(-\frac {2x^2} {2k}) $$

    With $x = \sqrt{ \ln (\maxlemconst d) } \sqrt k$ this becomes
    $2\exp(-\ln (\maxlemconst d)) = \frac 2 {\maxlemconst} \cdot \frac 1 d$. $R_0$
    and $R_d$ follow the distribution $H(\frac n 2, n, k)$,
    which yields an even stronger bound for $i = 0$ and $i =
    d$. Applying a union bound over all $i \in [d]$, the probability
    that at least one $|R_i|$ exceeds
    $\sqrt{\ln (\maxlemconst d)} \sqrt k$ is bounded by
    $\frac 2 \maxlemconst \frac {d+1} d < \frac 1 4$ (since $d \geq
    2$).
\end{proof}

\begin{claim}\label{sample_lem} Suppose $d < \frac n {(\ln n)^3}$. Given some $T_i \in G_i$ for $i \in
[1, d]$, let $S_j := \comp T_{\pi(j-1)} \cup
T_{\pi(j)}$ for $j \in [d]$. If $\{S_j\}_{j < i}$ are balanced on $A$
  then we have $S_i$ balanced on $A$ with probability at least
  $$\Theta\left( \exp(-\frac \dpowparam k \max \{R(\pi[0,i-1])^2, R_{\pi(i)}^2\})
  \sqrt{\frac 1 k} \right)$$
\end{claim}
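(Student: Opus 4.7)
The approach is to unfold the balance conditions on $S_1,\ldots,S_{i-1}$ to pin down exactly which value $|T_{\pi(i)}\cap A|$ must take for $S_i$ to be balanced, and then to apply a pointwise local estimate for the hypergeometric distribution.

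First, I would set $\alpha_l := |A\cap\chi_l|$ and $c_j := |T_{\pi(j)}\cap A|$, rewriting ``$S_j$ balanced'' as $(\alpha_{\pi(j-1)}-c_{j-1}) + c_j = k$. Starting from $c_0 = 0$ (since $T_{\pi(0)}=\emptyset$) and telescoping, with $\alpha_0 = k/2 + R_0$ and $\alpha_l = k + R_l$ for $l \in [d-1]$, the conditioning $\{S_j\}_{j<i}$ balanced forces $c_j = k/2 - R(\pi[0,j-1])$ for all $j \le i-1$. Substituting into the balance equation for $S_i$ shows that $S_i$ is balanced iff $|T_{\pi(i)}\cap A|$ hits the single value $v := k/2 - R(\pi[0,i-1])$. (The case $i=d$ is trivial: then $T_d = \chi_d$ is deterministic, and $|A|=n/2$ forces $S_d$ balanced once $S_1,\ldots,S_{d-1}$ are, so the stated bound holds vacuously.)

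Next, since $T_{\pi(i)}$ was sampled from $\binom{\chi_{\pi(i)}}{k}$ independently of the previously chosen $T$'s, conditioning on the prior balance does not change its distribution: $|T_{\pi(i)}\cap A|$ remains hypergeometric $H(k+R_{\pi(i)},\,2k,\,k)$, with mean $\mu = k/2 + R_{\pi(i)}/2$ and variance $\sigma^2 \approx k/8$. The target therefore lies at signed deviation $v-\mu = -R(\pi[0,i-1]) - R_{\pi(i)}/2$ from the mean. The key structural fact is that by the construction of $\pi$ in Claim~\ref{goodpi}, $R_{\pi(i)}$ has opposite sign to $R(\pi[0,i-1])$, so these two contributions partially cancel: setting $D := |v-\mu|$,
$$D \le \max\bigl(|R(\pi[0,i-1])|,\, |R_{\pi(i)}|/2\bigr), \qquad D^2 \le \max\bigl(R(\pi[0,i-1])^2,\, R_{\pi(i)}^2\bigr).$$

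Finally, I would apply Stirling pointwise to
$$\Pr\bigl(|T_{\pi(i)}\cap A| = v\bigr) \;=\; \binom{k+R_{\pi(i)}}{v}\binom{k-R_{\pi(i)}}{k-v}\Big/\binom{2k}{k}$$
to obtain the local limit estimate $\Theta(1/\sqrt{k})\,\exp\bigl(-D^2/(2\sigma^2)\bigr)$; since $2\sigma^2 \approx k/4$, the exponent becomes $-\dpowparam D^2/k$, which gives the claimed bound. The main obstacle is justifying this pointwise approximation uniformly over the relevant $v$: Stirling requires $D$ not too large relative to $\sigma$, and by Claim~\ref{low_max_lem} one has $D = O(\sqrt{k\ln d})$, so we need $(\ln d)^3 = o(k)$. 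This is precisely where the hypothesis $d < n/(\ln n)^3$ enters, since it gives $k = n/(2d) \ge \tfrac{1}{2}(\ln n)^3$. I would prove the estimate by direct Stirling analysis of the three binomial coefficients rather than invoking a black-box local CLT, as this allows explicit tracking of the constant $\dpowparam$ in the exponent.
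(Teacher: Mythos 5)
Your proposal is correct and follows essentially the same route as the paper: reduce the balance conditions on $S_1,\dots,S_{i-1}$ to the statement that $|T_{\pi(i)}\cap A|$ must hit the single target value $\frac k2 - R(\pi[0,i-1])$, observe that this quantity is hypergeometric $H(k+R_{\pi(i)},2k,k)$, and estimate the point probability by a Stirling-type local estimate (the paper does this via Lemma~\ref{lem:binomial} inside a separate Claim~\ref{algebra}). Your explicit invocation of the opposite-sign property from the construction of $\pi$ to bound the deviation by $\max\bigl(|R(\pi[0,i-1])|,|R_{\pi(i)}|/2\bigr)$ is precisely the step the paper uses implicitly (by taking $t,R\ge 0$) to obtain the constant $\dpowparam$ in the exponent, so this is a faithful reconstruction.
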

\begin{proof}

  Let $t := -R(\pi[0,i-1])$. Since the $\{ S_j\}_{j < i}$ are balanced, we have:
  \begin{equation}
    \label{eq:bal}
    |A \cap \cup_{j=1}^{i-1} S_j| = (i-1) k
\end{equation}

On the other hand:
\begin{align*}
  |A \cap \chi(\pi[0,i-1])| &= |A \cap \cup_{j=1}^{i-1} S_j| + |A \cap
                              \comp T_{\pi(i-1)}| & \\
                            &= (i-1) k + |A \cap
                              \comp T_{\pi(i-1)}| & \text{using
                                                    (\ref{eq:bal})} \\
  \intertext{and}
  |A \cap \chi(\pi[0,i-1])| &= (2i-1)\frac k 2 - t &
                                                                    \text{by
                                                                    definition
                                                                    of
                                                                    }
                                                                    R(\cdot)
\end{align*}
Therefore,
  $|A \cap \comp T_{\pi(i-1)}| = \frac k 2 - t$. To make $S_i$ to be balanced we
must have
$|A \cap T_{\pi(i)}| + |A \cap \comp T_{\pi(i-1)}| = k$. This means that the probability that $S_i$ is balanced is the
  probability that $|A \cap T_{\pi(i)}| = \frac k 2 + t$. 
  Let $x := |A \cap T_{\pi(i)}|$ and $R := R_{\pi(i)}$. We
    have that $x$ follows a hypergeometric distribution with
    parameters $H(k + R, 2k, k)$. Claim~\ref{algebra} below suffices to establish Claim~\ref{sample_lem}.
\end{proof}

\begin{figure}[h!]
  \label{fig:exGalvin}
    \centering
    \includegraphics[scale=2.8]{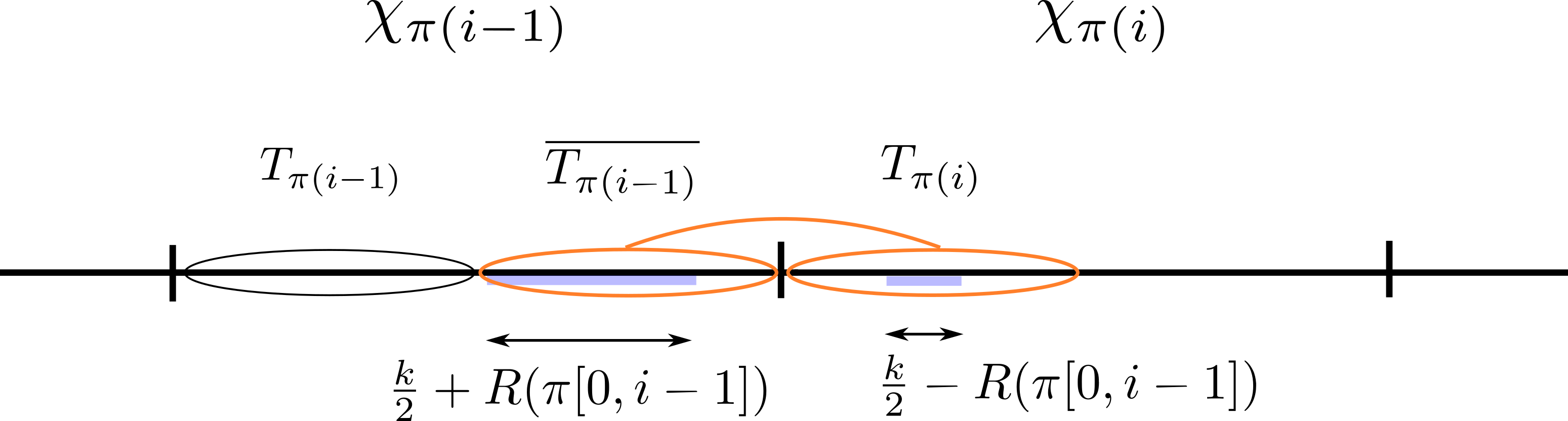}
    \caption{Conditions on $|A \cap T_{\pi(i)}|$}
  \end{figure}

  We state an easy lemma that will be helpful for Claim~\ref{algebra}
  to estimate binomial coefficients, a proof of which can be found in
  Spencer and Florescu~\cite{SF14}.
\begin{lemma}
  \label{lem:binomial}
  $\binom {n} {\frac n 2 - m} = 2^n \sqrt{\frac{2}{n\pi}}
  \exp\left(-\frac{2m^2}{n}\right)\left(1 + O(\frac{m^3}{n^2})\right)$
\end{lemma}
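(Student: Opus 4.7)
The cleanest route I see is to reduce the statement to the classical central binomial estimate $\binom{n}{n/2} = 2^n \sqrt{2/(n\pi)}\,(1 + O(1/n))$ (a standard consequence of Stirling's formula) and then compute the ratio to $\binom{n}{n/2-m}$ exactly. Writing both binomial coefficients as quotients of factorials and cancelling common factors yields
\[
\frac{\binom{n}{n/2-m}}{\binom{n}{n/2}} \;=\; \prod_{j=0}^{m-1} \frac{n/2 - j}{n/2 + j + 1},
\]
so it suffices to show that this product equals $\exp(-2m^2/n)\bigl(1 + O(m^3/n^2)\bigr)$.

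I would estimate the logarithm of the product by Taylor-expanding $\ln(1 \pm x)$. Each factor equals $(1 - 2j/n)/(1 + 2(j+1)/n)$, so the log of the product is
\[
\sum_{j=0}^{m-1} \bigl[\, \ln(1 - 2j/n) - \ln(1 + 2(j+1)/n) \,\bigr] \;=\; -\frac{2}{n}\sum_{j=0}^{m-1} (2j+1) \;+\; O\!\left(\frac{1}{n^2}\sum_{j=0}^{m-1} j^2\right),
\]
using $\ln(1+x) = x + O(x^2)$, valid when $|x|$ stays bounded away from $1$. The identity $\sum_{j=0}^{m-1}(2j+1) = m^2$ together with $\sum_{j=0}^{m-1} j^2 = O(m^3)$ reduces the right-hand side to $-2m^2/n + O(m^3/n^2)$. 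Exponentiating and multiplying by the central binomial estimate delivers the claimed formula.

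The only point that needs care is arguing that the Taylor remainders really accumulate to $O(m^3/n^2)$ rather than something larger. Because every $j \le m$ satisfies $2j/n \le 2m/n$, the quadratic remainder in each summand is $O(j^2/n^2)$ with an absolute constant as soon as $m$ is bounded away from $n/2$ (say $m \le n/4$); summing $m$ such terms then gives the advertised bound. The asymptotic claim is only meaningful in the range $m = o(n^{2/3})$, which is precisely the regime the lemma is applied to in Claim~\ref{sample_lem}, so I would implicitly restrict to that range and treat the remaining bookkeeping as routine. Outside this regime the left-hand side is so tiny that the stated error term is trivially satisfied.
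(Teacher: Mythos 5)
The paper does not actually prove this lemma: it is quoted with a pointer to Spencer and Florescu~\cite{SF14}, so there is no in-paper argument to compare against. Judged on its own, your route --- normalize by the central binomial coefficient, write the ratio as the telescoping product $\prod_{j=0}^{m-1}\frac{n/2-j}{n/2+j+1}$, and expand the logarithm --- is the standard proof and is essentially what the cited reference does. The algebra is right: $\sum_{j=0}^{m-1}(2j+1)=m^2$ produces the main term $-2m^2/n$, and the quadratic Taylor remainders sum to $O(m^3/n^2)$ uniformly once, say, $m\le n/4$.

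Two loose ends are worth recording, both minor. First, the central binomial estimate carries its own error of order $\Theta(1/n)$, which is \emph{not} $O(m^3/n^2)$ when $m=o(n^{1/3})$; what your argument actually delivers is an error of $O(m^3/n^2+1/n)$. This is really a blemish in the lemma's statement (it already fails as written at $m=0$) rather than in your proof, and it is harmless where the lemma is used, since Claim~\ref{algebra} only needs a $1+o(1)$ factor. Second, your dismissal of the range $m=\Omega(n^{2/3})$ as ``trivially satisfied'' is not trivial: a priori the ratio of the two sides is only bounded by $O(\sqrt{n})$ (via $\binom{n}{n/2-m}\le 2^n e^{-2m^2/n}$), so $|\mathrm{ratio}-1|=O(m^3/n^2)$ becomes automatic only once $m\gtrsim n^{5/6}$, and the window $n^{2/3}\lesssim m\lesssim n^{5/6}$ needs the next-order term $-\frac{4m^4}{3n^3}$ in the exponent (which is indeed $O(m^3/n^2)$ since $m\le n$, but that requires saying so). Since the paper invokes the lemma only under the explicit hypothesis $(\frac{R}{2}-t)^3=o(k^2)$, i.e.\ exactly your regime $m=o(n^{2/3})$, the cleanest fix is to state that restriction as a hypothesis of the lemma and drop the claim about the remaining range.
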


  \begin{claim}
    \label{algebra}
    We have that $x = \frac k 2 + t$ with probability at least
    $$\Theta \left( \exp(-\frac \dpowparam k \max \{t^2, \frac {R^2} 4\})
    \sqrt{\frac 1 k} \right)$$
  \end{claim}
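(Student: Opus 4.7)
The plan is to compute $\Pr(x = k/2 + t)$ directly from the hypergeometric density and apply Lemma~\ref{lem:binomial} to the three binomial coefficients that appear. Writing the distribution explicitly,
\[
\Pr(x = k/2 + t) \;=\; \frac{\binom{k+R}{k/2+t}\,\binom{k-R}{k/2-t}}{\binom{2k}{k}},
\]
so the task reduces to estimating each binomial and combining them.

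I would apply Lemma~\ref{lem:binomial} with $m_1 := R/2 - t$ to the first numerator factor (checking $(k+R)/2 - m_1 = k/2 + t$), with $m_2 := t - R/2$ to the second (so $m_1^2 = m_2^2 = (R/2-t)^2$), and with $m = 0$ to the denominator. The powers $2^{k+R}\cdot 2^{k-R} = 2^{2k}$ cancel; the square-root prefactors combine to $\tfrac{2\sqrt{k}}{\sqrt{\pi(k^2-R^2)}} = \Theta(1/\sqrt{k})$ as long as $|R| \ll k$; and the exponentials combine to
$\exp\bigl(-2(R/2-t)^2(\tfrac{1}{k+R} + \tfrac{1}{k-R})\bigr) = \exp\bigl(-\tfrac{4k(R/2-t)^2}{k^2-R^2}\bigr)$, which under $|R|\ll k$ equals $\exp\bigl(-\tfrac{4(R/2-t)^2}{k}\bigr)\cdot(1+o(1))$. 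The remaining algebraic step is the inequality $(R/2-t)^2 \le \max\{t^2, R^2/4\}$, which holds as soon as $t$ and $R$ share a sign; this is exactly what Claim~\ref{goodpi} arranges in the context of Claim~\ref{sample_lem}, where $R = R_{\pi(i)}$ was picked to have sign opposite to $R(\pi[0,i-1]) = -t$. Plugging in yields the desired exponent $-\tfrac{4}{k}\max\{t^2, R^2/4\}$.

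The one point that needs care, and is the main obstacle, is verifying that the $O(m^3/n^2)$ corrections in Lemma~\ref{lem:binomial} together with the approximation $\sqrt{k^2-R^2} \approx k$ contribute only a $1+o(1)$ multiplicative factor, so they can be absorbed into the $\Theta(\cdot)$. This reduces to checking $|R|, |t| = O(\sqrt{k \ln d})$, which is furnished by Claim~\ref{low_max_lem}, combined with the standing hypothesis $d < n/(\ln n)^3$ (so $k \ge (\ln n)^3/2$), which gives $(\ln d)^{3/2}/\sqrt{k} = o(1)$ and $R^2/k^2 = o(1)$. Everything else is straightforward substitution into Lemma~\ref{lem:binomial}.
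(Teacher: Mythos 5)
Your proposal follows essentially the same route as the paper: write out the hypergeometric density, apply Lemma~\ref{lem:binomial} to the three binomial coefficients with $m = \pm(R/2 - t)$ and $m=0$, combine the prefactors and exponentials, and use the bounds $|R|, |t| = O(\sqrt{k \ln d})$ from Claim~\ref{low_max_lem} together with $d < n/(\ln n)^3$ to absorb the error terms. If anything you are slightly more explicit than the paper about the final inequality $(R/2-t)^2 \le \max\{t^2, R^2/4\}$ and the sign agreement of $t$ and $R$ coming from Claim~\ref{goodpi}, which the paper leaves implicit.
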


\begin{proof}
  As $x$ follows a hypergeometric distribution with parameters
  $H(k+R,2k,k)$, we have that
  \begin{equation}
  \label{eq:px}
  P(x = \frac k 2 + t) = \binom {k+R} {\frac k 2 + t}
  \binom {k-R} {\frac k 2 - t} {\binom {2k} k}^{-1}.
  \end{equation}
    
  As long as $(\frac R 2 - t)^3 = o(k^2)$, which is the case when $d <
  \frac{n}{(\ln n)^3}$, we may apply
  Lemma~\ref{lem:binomial}, we have that (\ref{eq:px}) equals

\begin{align*}
    =& 2^{k+R} \sqrt{\frac{2}{(k+R)\pi}}
  \exp\left(-\frac{2(\frac R 2 - t)^2}{k+R}\right) \\
    &\times 2^{k-R} \sqrt{\frac{2}{(k-R)\pi}}
  \exp\left(-\frac{2(\frac R 2 - t)^2}{k-R}\right) \\
    &\times \left( 2^{2k} \sqrt{\frac{2}{2k\pi}} \right)^{-1}
    (1 + o(1)) \\
    =& \sqrt{\frac {4k} {(k+R)(k-R) \pi}}
    \exp \left( -2 (\frac R 2 - t)^2 (\frac 1 {k+R} + \frac 1 {k-R}) \right)
    (1 + o(1)) \\
    =& \sqrt{\frac {4k} {(k^2 - R^2)\pi}}
    \exp \left( \frac {-4k(\frac R 2 - t)^2} {k^2 - R^2} \right)
    (1 + o(1)) \\
    \intertext{By Claim~\ref{low_max_lem} we have $0 \le t, R \le
  \sqrt{ \ln(\maxlemconst d)} \sqrt k = o(k)$, therefore we finally get}
    =& \sqrt{\frac 4 {k\pi}} \exp\left(- \frac 4 k (\frac R 2 - t)^2\right) (1 + o(1))
  \end{align*}
\end{proof}

Combining Claim~\ref{low_max_lem} and Claim~\ref{sample_lem}, we have a probability of
\begin{align*}
  \Theta \left( \exp(-\frac \dpowparam k (k \ln (\maxlemconst d)))
    \sqrt{\frac 1 k} \right)
  &= \Theta \left( \exp(- \dpowparam \ln (\maxlemconst d)) \sqrt{\frac d n} \right)\\
  &= \Theta((\maxlemconst d)^{-\semifinaldpow/2} n^{-1/2})
\end{align*}
that $S_i$ is balanced. Call this probability $y$. If
$|G_i| = \frac {\ln(4d)} y$ then the probability that some choice of
$T_{\pi(i)}$ balances $S_i$ is at least $1 - \frac 1{4d}$. By the
union bound, the chance that $|R_i|$ is not bounded in
Claim~\ref{low_max_lem} or that any $S_{i}$ is unbalanced is at most
$\frac 1 4 + d \frac 1{4d} = \frac 1 2$. Hence the probability that we
get a $d$-Galvin partition is at least $\frac 1 2$, as desired.

\end{proof}

  In the above proof we used $d < \frac n {(\ln n)^3}$ to apply Lemma~\ref{lem:binomial}. While this could perhaps be improved to $d = \frac n {\ln n}$, there is a real barrier here. When $d$ is this large we expect some buckets to be entirely empty of elements from $A$ and the above proof does not work. We now handle the case where $d$ is larger.

\noindent{\textbf{Second case: }$d \geq \frac n {(\ln n)^3}$}
 
\begin{proof}[Proof of Theorem~\ref{thm:main}, second case]
  First, observe that Galvin families compose nicely; if $\F$ is an
  $a$-Galvin family over $[n]$, and if we take a $b$-Galvin family
  $\F_S$ over $S$ for each set $S \in \F$, then the union of all
  $\F_S$ forms an $ab$-Galvin family.

  Set $d' = \frac{n}{(\ln n)^3} $ and assume for the moment that $d'$
  and $\frac d {d'}$ are valid factors of $d$. The idea is to start by
  constructing a $d'$-Galvin family $\F$ over $[n]$, using the
  previous construction. We then recursively apply the construction to
  get a $\frac d {d'}$-Galvin family $\F_S$ for any each $S \in \F$,
  and the final family is the union of all $\F_S$. The elements of
  $\F$ are sets of size $(\ln n)^3$, therefore the families $\F_S$ are
  of size $\tilde \Theta(1)$, and the overall construction is of size
  $\tilde \Theta(n^2 d^\finaldpow)$.

  In the case that $d'$ and $\frac d {d'}$ are not valid factors of
  $d$, we do the following. Let $k' = \lfloor \frac d {d'}
  \rfloor$. The idea is to construct a family $\F$ with sets of size
  $2k'k$, and $2(k'+1)k$, that behaves like a Galvin family: we ask
  that any set $A$ has a partition of $[n]$ from sets in $\F$, where
  each set of the partition is balanced on $A$. We then apply
  recursively the construction to split the sets of size $2k'k$ and
  $2(k'+1)k$ until we get size $k$ sets. To create the family $\F$, we
  adapt the construction of the Galvin family when
  $d < \frac{n}{(\ln n)^3}$, in the following way. Note that in any
  partition of $[n]$ into sets of these sizes, the number of sets of
  size $2k'k$ and $2(k'+1)k$ are fixed (given by $d$ and $n$). We
  denote these numbers by $f$ and $c$. We need to ensure that the
  $\comp {T_i^h} \cup T_j^l$ are of the correct sizes (\ie $2k'k$ or $2(k'+1)k$). For that, we change the sizes of the
  $\chi_i$ in the following way:
  \begin{itemize}
\item $|\chi_0| = k'k$
  \item For $c$ values of $i \in [1,d-1]$, we have $|\chi_i| = 2(k'+1)k$
\item For the other $i \in [1,d-1]$ we have $|\chi_i| = 2k'k$
\item $|\chi_d| = k'k$.
  \end{itemize}
  We then choose the $T_i^h$ to be of size $k'k$ except for $i=0$
  where the unique $T_0$ remains $\emptyset$. This gives the desired sizes for $|S_i|$ and
  it is not hard to see that the proof carries over to this case
  with some simple and obvious modifications.

\end{proof}

\subsection{Galvin family without the divisibility condition}
The previous definition of a $d$-Galvin family requires $2d \mid
n$. Here we present a relaxed version, which can be defined without
the divisibility condition, and prove that such families of polynomial
size can be obtained using our previous construction.

When the divisibility condition does not hold we would like $d$ sets to be exactly or almost exactly balanced on $A$ and for those sets to be as close in size as possible. To be exactly balanced they must have evenly many elements, so if $[n]$ is odd then we must include a set of odd size which is imbalanced by 1 element. Of the remaining elements, the closest they can come in size is differing by 2 elements - being of size either $2\lfloor k \rfloor$ or $2\lceil k \rceil$. We are able to achieve this best possible outcome.

\begin{definition}[$d$-Galvin family, second version]
  Given two integers $d, n \in \N$ with $d \leq n$, we say that a family
  $\F \subseteq 2^{[n]}$ is \defin{$d$-Galvin} if for any
  $A \in \binom {[n]} {\lceil n/2 \rceil}$, \defin{$A$ is
    handled by $\F$}, meaning that there exist $d$ sets
  $S_1,\dots,S_d \in \F$ such that:
  \begin{enumerate}
  \item $\forall i < d$, $|S_i| = 2 \lfloor k \rfloor$ or
    $|S_i| = 2 \lceil k \rceil$,
    \item $2 \lfloor k \rfloor \leq |S_d| \leq 2 \lceil k \rceil$
  \item The $S_i$ form a partition of $[n]$,
  \item For $i < d$, each $S_i$ is balanced on $A$.
  \item $|\comp A \cap S_d| \leq |A \cap S_d| \leq |\comp A \cap S_d| + 1$.
  \end{enumerate}
\end{definition}

\begin{figure}[h!]
  \label{fig:exGalvin}
    \centering
    \includegraphics[scale=1.3]{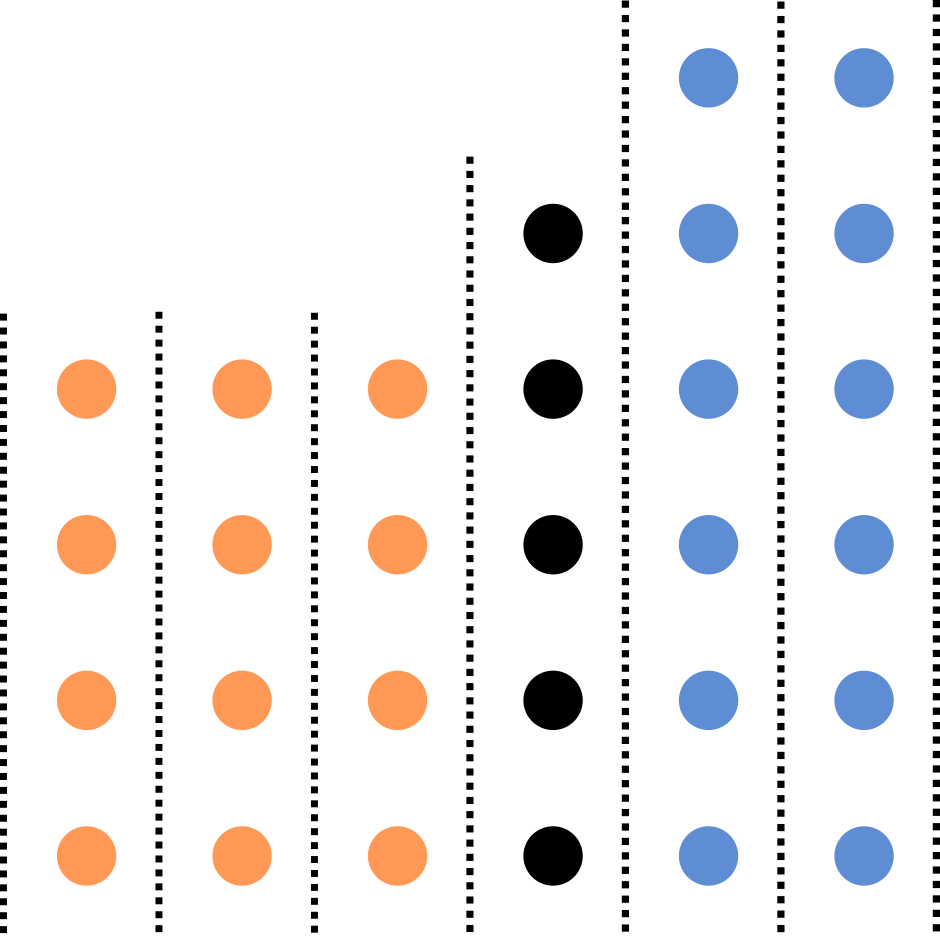}
    \caption{For $n = 29, d = 6$, we have three sets of size
      $2 \lfloor k \rfloor$, two sets of size $2 \lceil k \rceil$, and
      one set of size $\lfloor k \rfloor + \lceil
      k \rceil$.}
  \end{figure}

\begin{theorem}
  There exists a $d$-Galvin family of size polynomial in $d$ and $n$.
\end{theorem}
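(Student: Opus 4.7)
The plan is to adapt the bucket-and-sample construction of Theorem~\ref{thm:main} (in particular its second case, which already permits buckets of unequal sizes) and to handle the parity of $n$ by reserving a single element.

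When $n$ is even, write $n = 2d \lfloor k \rfloor + 2c$ for some $0 \leq c \leq d-1$, where $k = n/(2d)$. I would partition $[n]$ into $d+1$ buckets $\chi_0, \ldots, \chi_d$ of sizes chosen so that every set of the form $\comp{T_i^h} \cup T_j^l$ has size either $2 \lfloor k \rfloor$ or $2 \lceil k \rceil$: for example, $|\chi_0| = |\chi_d| = \lfloor k \rfloor$, together with $c$ middle buckets of size $2 \lceil k \rceil$ and $d-1-c$ middle buckets of size $2 \lfloor k \rfloor$. Sampling $\tilde \Theta(n^{1/2} d^{\semifinaldpow/2})$ random halves from each bucket and forming $\F$ exactly as in Lemma~\ref{lem:randomf}, the arguments of Claims~\ref{goodpi}--\ref{algebra} carry over nearly verbatim, because bucket sizes differ from $2\lfloor k \rfloor$ only by an additive constant and this perturbation affects the Hoeffding and hypergeometric estimates only by constant factors. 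The resulting family satisfies conditions (1)--(4); condition (5) holds automatically for even $|S_d|$, since $|A \cap S_d| = |\comp A \cap S_d|$ follows from the exact balance of $S_1, \dots, S_{d-1}$ and $|A| = n/2$.

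When $n$ is odd, I would single out an element $e \in [n]$ and apply the even construction on $[n] \setminus \{e\}$ to produce a family $\F_0$ that partitions $[n] \setminus \{e\}$ into $d$ balanced sets whenever $|A \setminus \{e\}|$ takes either of the two possible values (depending on whether $e \in A$ or $e \notin A$). To match the target structure, I add $e$ to the set that plays the role of $S_d$ in every partition, using the bucket $\chi_d$ as the natural carrier of $e$. This forces $|S_d|$ to be odd (matching the forced parity observed when $r = n \bmod 2d$ is odd) and condition (5) then follows from arithmetic: since $|A| - |\comp A| = 1$ and $\sum_{i<d}(|A \cap S_i| - |\comp A \cap S_i|) = 0$, we must have $|A \cap S_d| - |\comp A \cap S_d| = 1$.

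The main obstacle I anticipate is that $A \setminus \{e\}$ is not uniformly distributed on $\binom{[n] \setminus \{e\}}{\lceil n/2 \rceil}$ unconditionally, so the probabilistic argument of Lemma~\ref{lem:randomf} does not apply to it as-is. I would therefore split into the two subcases $e \in A$ and $e \notin A$, in each of which $A \setminus \{e\}$ is uniform on a well-defined slice of $[n] \setminus \{e\}$, run the base construction twice (once per subcase) with appropriately shifted target sizes, and take the union of the two families. Each adaptation multiplies the family size by at most a constant, so the final family remains of size polynomial in $n$ and $d$, as required.
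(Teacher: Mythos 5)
Your plan is essentially the paper's: the published argument is itself only a sketch that reuses the bucket construction with perturbed bucket sizes ($c$ middle buckets of size $2(\lfloor k\rfloor+1)$, the rest of size $2\lfloor k\rfloor$, and $|\chi_0|=|\chi_d|=\lfloor k\rfloor$ up to a parity adjustment), exactly as you propose for even $n$. One concrete slip to fix: you say you sample ``random halves'' of each bucket, but a half of a size-$2\lceil k\rceil$ bucket has $\lceil k\rceil$ elements, so a set $\comp{T_i^h}\cup T_j^l$ built from one large and one small bucket would have odd size $\lfloor k\rfloor+\lceil k\rceil$ and could never be exactly balanced, violating condition (1). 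The paper avoids this by taking \emph{every} $T_i^h$ of size $\lfloor k\rfloor$ regardless of $|\chi_i|$, so that $\comp{T_i^h}$ has size $\lfloor k\rfloor$ or $\lfloor k\rfloor+2$ and all unions are even; you need the same choice. For odd $n$ your route genuinely differs: you reserve an element $e$ and run the even construction on $[n]\setminus\{e\}$, whereas the paper absorbs the parity into the bucket structure by giving $\chi_0$ size $\lfloor k\rfloor+1$, so the one odd, singly-imbalanced set arises natively in the chain $S_j=\comp{T_{\pi(j-1)}}\cup T_{\pi(j)}$. Your device is workable, and you correctly identify the key subtlety (that $A\setminus\{e\}$ is uniform only after conditioning on $e\in A$ or $e\notin A$, with two different slice sizes); the paper's version sidesteps this by never changing the ground set, which is somewhat cleaner since the whole probabilistic argument then goes through with a single family. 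Finally, note that the direct construction only works when $k=\omega((\ln n)^3)$; for larger $d$ one must compose with the recursive construction from the second case of Theorem~\ref{thm:main}, which your proposal gestures at in its opening sentence but does not actually carry out --- though the paper's own sketch is equally terse on this point.
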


\begin{proof}[Sketch of the proof.]
  We modify the previous construction slightly in order to handle this
  more general setting. This is very similar to the proof of
  Theorem~\ref{thm:main} in the case $d \geq \frac{n}{(\ln
    n)^3}$. Suppose $k$ is not an integer and write
  $k' := \lfloor k \rfloor$. Furthermore, assume for the moment that
  $k = \omega((\ln n)^3)$ so that the construction from
  Claim~\ref{sample_lem} holds. Note that in any partition of $[n]$
  into sets that respect properties $(1)$ and $(2)$ of the definition,
  the number of sets of size $2k'$, $2k' +1$, and $2(k'+1)$ are fixed
  (given by $d$ and $n$). We denote these numbers by $f, m$ and
  $c$. We need to ensure that the $\comp {T_i^h} \cup T_j^l$ are of
  the correct size in order to be able to fulfill our definition. For
  that, we change the size of the $\chi_i$ in the following way:
  \begin{itemize}
\item $|\chi_0| = k'$ if $m = 0$ and $k'+1$ otherwise
  \item For $c$ values of $i \in [1,d-1]$, we have $|\chi_i| = 2(k'+1)$
\item For the other $i \in [1,d-1]$ we have $|\chi_i| = 2k'$
\item $|\chi_d| = k'$.
  \end{itemize}
  We then choose the $T_i^h$ to be of size $k'$ except for $i=0$ where
  the unique $T_0$ remains $\emptyset$. By doing so, the partitions
  from the family respect properties $(1)$ and $(2)$, and again 
  the proof that this gives a valid construction is very close
  to the original proof and we omit the details.

  Finally, if $k = O((\ln n)^3)$ then we may have to simultaneously
  apply the adjustments above and the ones in the proof of the second
  case of Theorem~\ref{thm:main}.

  \end{proof}

\section{Discussion and open questions}
The actual construction is probabilistic and it could be interesting to
derandomize it, without increasing too much the size of the family. A
way to tackle the problem is to carefully design the sets $T_i$
belonging to $G_i$ instead of taking them randomly.

The given upper bound is nicely polynomial in $n$ and
$d$ but it is unlikely to be tight.
We suspect that even modifications of the current
construction can yield some improvements.  In particular,
the family $\F$ from Lemma~\ref{lem:randomf} is constructed by
taking the union $\comp T_i \cup T_j$ over all possible pairs
$(T_i,T_j) \in G_i \times G_j$ for $i,j \in [d]$. It might be
possible to 
restrict $(i,j)$ to come from the edges of a sparse graph over the
vertices $[d]$, and still prove
Claim \ref{goodpi}, maybe in
some slightly weaker form, possibly saving a factor close to $d$.
Even if this is possible the resulting family is still not likely
to be optimal size and hence we have not investigated this approach
in detail as it would lead to considerable complications and
we prefer a simple construction.  A truly optimal construction 
is likely to require some new ideas.

While
there is a linear lower bound for the original Galvin problem, it is
not clear how to derive from this linear lower bounds for $d$-Galvin
families for $p > 2$. An easy counting argument, similar to the one
for the original Galvin problem, gives that
$|\F|^{d-1} \geq \frac{\binom {n} {n/2}}{\binom {n/d} {k}^d}$
(since the number of possible partitions of $[n]$ with
$d$ sets from $\F$ is bounded by $|\F|^{d-1}$), providing $|\F| \geq
\Omega(\frac {\sqrt n} {d^{\frac 1 2 -\frac 1 {2d}}})$.
When focusing on large $d$ we get the simple bound below which is
an improvement in the regime
$d = \Omega(n^{1/5})$:

  \begin{claim} A $d$-Galvin family must be size at least $\frac {d^2} 2$.
  \end{claim}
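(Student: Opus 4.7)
The plan is to prove a per-element degree lower bound and then sum. For each $e \in [n]$, let $\deg(e) := |\{S \in \F : e \in S\}|$ and $U_e := \bigcup_{S \in \F,\, e \in S} (S \setminus \{e\})$. Since every set in $\F$ has size $2k$, we have $|U_e| \leq (2k-1)\deg(e)$, so a lower bound on $|U_e|$ yields a lower bound on $\deg(e)$, and the identity $\sum_{e \in [n]} \deg(e) = 2k\,|\F|$ then converts this into a lower bound on $|\F|$.

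To lower bound $|U_e|$, I consider any $A \in \binom{[n]}{n/2}$ with $e \in A$. By the $d$-Galvin property there is a block $S$ of the partition of $A$ that contains $e$, and $S$ is balanced, so $|S \cap \comp A| = k$; since $e \notin \comp A$, all $k$ elements of $S \cap \comp A$ lie in $U_e$. Thus $|U_e \cap \comp A| \geq k$ is a necessary condition for every such $A$. An adversary can stuff up to $\min(|U_e|, n/2 - 1)$ elements of $U_e$ into $A \setminus \{e\}$, so the worst case of $|U_e \cap \comp A|$ equals $\max(0, |U_e| - n/2 + 1)$; requiring this to be at least $k$ forces $|U_e| \geq n/2 + k - 1$. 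Hence $\deg(e) \geq (n/2 + k - 1)/(2k - 1)$.

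Summing this over $e$ and substituting $n = 2kd$ gives $|\F| \geq d\,(k(d+1) - 1)/(2k - 1)$, and an elementary manipulation (the inequality reduces to $2k + d \geq 2$) shows this is at least $d^2/2$. The heart of the argument is the necessary condition $|U_e \cap \comp A| \geq k$; the only mildly delicate bookkeeping is splitting on whether $|U_e|$ is below or above $n/2 - 1$ when identifying the adversary's best $A$, but both regimes collapse to the same clean bound $|U_e| \geq n/2 + k - 1$.
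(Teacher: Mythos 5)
Your proof is correct and follows essentially the same route as the paper's: a per-element degree lower bound, obtained by letting an adversarial $A$ of size $n/2$ swallow the union of the sets of $\F$ containing a fixed element $e$, followed by double counting the incidences via $\sum_e \deg(e) = 2k|\F|$. Your version is a mild quantitative sharpening (you require $k$ elements of the covering block to lie outside $A$, rather than merely observing that the block cannot be wholly contained in $A$), which improves the per-element bound from $d/2$ to $(k(d+1)-1)/(2k-1)$ but lands at the same $d^2/2$ in the end.
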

  \begin{proof}
    Let us fix a $d$-Galvin family $\F$ over $[n]$, and consider the
    set $B = \{(S,x), S \in \F, x \in S\}$.

    We first prove that for any $x \in [n]$, there must be
      at least $\frac d 2$ sets from $\F$ that contain $x$. Suppose it
      is not the case for a particular $a \in [n]$, and consider a set
      $A$ of size $\frac n 2$ that contains
      $(\cup_{S \text{ s.t } a \in S} S)$ (such a $A$ exists since by
      the assumption the union is smaller than or equal to
      $\frac n 2$). Any set $S\in \F$ that contains $a$ is completely
      included in $A$, and thus cannot be balanced on $A$. Therefore
      $A$ is not handled by $\F$.

      Finally, observe that the previous remark implies that
      $|B| \geq \frac{nd}{2}$. As each set $S \in \F$ is of size
      $\frac n d$, the number of sets in $\F$ must be at least
      $\frac{d^2}{2}$.
  
  \end{proof}

\section*{Acknowledgements}
We thank Andrew Morgan for giving helpful suggestions in the details
of claims 4 and 5. The second author would like to thank Hervé
Fournier for valuable discussions.
\bibliographystyle{plain}

\bibliography{references}

\end{document}